\documentclass[11pt]{amsart}
\usepackage[T1]{fontenc}
\usepackage{newtxtext}
\usepackage{amsmath,amssymb,amsthm}
\usepackage{newtxmath}
\usepackage[margin=1.08in]{geometry}
\usepackage{microtype}
\usepackage{booktabs}
\usepackage{array}
\usepackage{enumitem}
\usepackage[dvipsnames]{xcolor}
\definecolor{paperblue}{HTML}{1F4E79}
\usepackage[colorlinks=true,linkcolor=paperblue,citecolor=paperblue,urlcolor=paperblue,
  pdfauthor={Dane Wachs},
  pdftitle={A note on a recent claimed proof of the irrationality of Catalan's constant}]{hyperref}
\allowdisplaybreaks[2]
\setlist[itemize]{leftmargin=1.6em,itemsep=0.15em,topsep=0.35em}
\setlist[enumerate]{leftmargin=1.8em,itemsep=0.15em,topsep=0.35em}
\newtheorem{theorem}{Theorem}[section]
\newtheorem{lemma}[theorem]{Lemma}
\newtheorem{proposition}[theorem]{Proposition}
\newtheorem{corollary}[theorem]{Corollary}
\theoremstyle{definition}
\newtheorem{remark}[theorem]{Remark}
\newcommand{\Z}{\mathbb{Z}}
\newcommand{\Q}{\mathbb{Q}}

\newcommand{\Fp}{\mathbb{F}_p}
\newcommand{\qhat}{\widehat{q}}
\DeclareMathOperator{\rank}{rank}
\DeclareMathOperator{\lcm}{lcm}
\DeclareMathOperator{\den}{den}

\title[A claimed proof of the irrationality of Catalan's constant]{A note on a recent claimed proof\\ of the irrationality of Catalan's constant}
\author{Dane Wachs}
\date{September 15, 2026}

\begin{document}

\begin{abstract}
The preprint arXiv:2609.04176 (Z.-W.~Sun, 3 September 2026) claims a proof that Catalan's constant
$G=\sum_{k\ge0}(-1)^k(2k+1)^{-2}$ is irrational: it constructs for each $B$ a number $\qhat_B$ from weighted
tails of the series and asserts that, if $G=a/q$, the integer $N_B=q^SH_B^{\min}\qhat_B$ satisfies $0<|N_B|<1$
for large $B$ (its Theorem~9.1). We report four exact computations. First, the printed proof of the rank
theorem (its Theorem~2.1) carries $T_i$ where its own recurrence requires $T_{i+1}$; the slip is repairable, and the statement is verified exactly at the parameter pairs listed in Proposition~\ref{prop:rank} (with $S\le4$, $B\le30$). Second, with the paper's definitions and the worst-case
denominator, which is attained at an explicit rational $a/q$, a lower bound for the quantity the derivation of Theorem~9.1 controls equals $+1.49$ to $+1.79\,B^2$ at eighteen indices $20\le B\le119$, where the derivation claims at most $-0.00966\,B^2+o(B^2)$.
Third, the paper's own denominator bound (its Corollary~5.2) holds in every tested instance and is nearly sharp,
so in every tested instance the discrepancy lies in the asymptotic ledger of its Sections~6--9. Fourth, an exact identity locates it: the
prime $2$ contributes $v_2(F_D)\log2=(2\log2+o(1))B^2$ to the quantity, because the positive part at $2$ vanishes
while $|\qhat_B|$ carries the full power of $2$ in $F_D$; no display in the ledger carries a term of this order, and
the paper's Remark~9.3 assigns it to the derivation of the odd-prime constant $c_{\rm odd}=0.006$ without showing how; the displayed derivation does not account for it. At the tested parameters the odd-prime layers agree with the paper's
constants to within about $0.1\,B^2$, and at the tested indices the remainder of the discrepancy is the size of the residual minor. These computations test the paper's bounding method, which is uniform in $(a,q)$; they show that the
published derivation does not justify the claimed estimate, so the proof of the main theorem is incomplete.
Nothing is concluded about the irrationality of $G$.
\end{abstract}

\maketitle

\section{The construction}

We use the notation of \cite{Sun26} throughout. Put
\[
S_{m-1}=\sum_{k=0}^{m-1}\frac{(-1)^k}{(2k+1)^2},\qquad
T_m=\sum_{r\ge0}\frac{(-1)^r}{(2m+2r+1)^2}=(-1)^m\,(G-S_{m-1}),\qquad
u_m=\frac{T_m}{2m+1},
\]
so that $T_m+T_{m+1}=(2m+1)^{-2}$ (\cite[(1.1)--(1.4)]{Sun26}). For integers $B>S>0$ let
\[
\Pi_i=\prod_{h=1}^{B}\bigl(2(h+i)+1\bigr)^2\quad(i\ge0),\qquad
\mathcal R_{a,j}=\sum_{i=0}^{a+2B}(-1)^i\binom{a+2B}{i}\Pi_i\,u_{i+j}\quad(0\le a\le S+2,\ 1\le j\le S),
\]
an $(S+3)\times S$ matrix (\cite[(2.1)]{Sun26}). Theorem~2.1 of \cite{Sun26} asserts $\rank\mathcal R=S$.
Fix a set $A\subset\{0,\dots,S+2\}$ of $S$ rows with $\det\mathcal R[A,J]\ne0$, $J=\{1,\dots,S\}$, and let
$N=2B+S+3$, $F_B=\prod_{r=0}^{2B-1}r!$. Proposition~3.1 of \cite{Sun26} defines the scalar
\begin{equation}\label{eq:qhat}
\qhat_B=\det\mathcal A_B=\pm\,\frac{F_B\,\det\mathcal R[A,J]}{\prod_{i=0}^{N-1}\Pi_i},
\end{equation}
where $\mathcal A_B$ is an explicit $N\times N$ matrix whose columns are $i^r/\Pi_i$ ($0\le r<2B$), $u_{i+j}$
($j\in J$) and $\binom{i}{2B+c}/\Pi_i$ ($c\notin A$). Since each $u_m$ is an affine function of $G$ with rational
coefficients, $\qhat_B$ is a polynomial in $G$ of degree at most $S$ with rational coefficients,
\begin{equation}\label{eq:poly}
\qhat_B=\sum_{k=0}^{S}c_kG^k,\qquad c_k\in\Q .
\end{equation}
The proof of the main theorem of \cite{Sun26} assumes $G=a/q$ with $\gcd(a,q)=1$, so that $q^S\qhat_B=\sum_k c_ka^kq^{S-k}$ has denominator dividing that of the coefficients, lets $H_B^{\min}$ be the minimal positive integer with
$N_B:=q^SH_B^{\min}\qhat_B\in\Z$, notes $N_B\ne0$ by Theorem~2.1, and claims (Theorem~9.1, Proposition~9.5,
with $S=\lfloor B/20\rfloor$)
\begin{equation}\label{eq:claim}
\log H_B^{\min}+\log|\qhat_B|\le-\delta_0B^2+o(B^2),\qquad\delta_0>0.00966,
\end{equation}
whence $0<|N_B|<1$ for large $B$.

All computations below use exact rational arithmetic (Python \texttt{fractions}) for every algebraic step;
floating point (\texttt{mpmath}) enters only in evaluating the fixed real number $\qhat_B(G)$ from its exact
rational coefficients, with an explicit rounding bound. Scripts and outputs are supplied as ancillary files
(Section~\ref{sec:repro}).

\section{The rank theorem: proof inconsistent but repairable, statement true where tested}

\subsection{The gap}
The proof of \cite[Theorem~2.1]{Sun26} takes a vector $\lambda$ in the right kernel, forms
$f_i=T_iD_\lambda(i)+P_\lambda(i)$, where $D_\lambda=L_SE^2P^*_\lambda$ with $L_S(X)=\prod_{j=1}^S(2X+2j+1)$,
$E(X)=\prod_{S<h\le B}(2X+2h+1)$ and $\deg P^*_\lambda\le S-1$, so that $\deg D_\lambda\le2B-1$, interpolates to
$A(x)$ with $A(i)=T_iD_\lambda(i)$ for $0\le i\le2B+S+2$, and defines (its (2.12))
\[
K(X)=(2X+3)^2\bigl(A(X)D_\lambda(X+1)+A(X+1)D_\lambda(X)\bigr)-D_\lambda(X)D_\lambda(X+1),
\]
asserting $K(i)=0$ for $0\le i\le2B+S+1$ (its (2.13)). But at an integer $i$,
\[
A(i)D_\lambda(i+1)+A(i+1)D_\lambda(i)=D_\lambda(i)D_\lambda(i+1)\,(T_i+T_{i+1})=\frac{D_\lambda(i)D_\lambda(i+1)}{(2i+1)^2},
\]
so $K(i)=D_\lambda(i)D_\lambda(i+1)\bigl[(2i+3)^2/(2i+1)^2-1\bigr]$, which is nonzero whenever
$D_\lambda(i)D_\lambda(i+1)\ne0$. If the factor is corrected to $(2X+1)^2$, then (2.13) holds, but the additional
zero $K(-3/2)=0$ of its (2.16) is no longer available, and the zero count gives $\deg K\ge4B$ against
$\deg K\le4B$: no contradiction. The printed argument therefore does not prove the theorem as written. The inconsistency is between (2.3), stated with $T_i$, and the later displays, which use $T_{i+1}$ (the paper's own reduction of $\mathcal R_{a,j}$ is written with $\Pi_iT_{i+1}$). Carrying $T_{i+1}$ throughout repairs the count, as observed in \cite{MO26}. With the paper's sign
convention (2.7) one obtains $A(i)=-T_{i+1}D_\lambda(i)$ for $0\le i\le2B+S+2$. Put $\widetilde D_\lambda=-D_\lambda$,
so that $A(i)=T_{i+1}\widetilde D_\lambda(i)$, and define $\widetilde K$ by the paper's (2.12) with $\widetilde D_\lambda$
in place of $D_\lambda$ and the factor $(2X+3)^2$ as printed. Then
$A(i)\widetilde D_\lambda(i+1)+A(i+1)\widetilde D_\lambda(i)=\widetilde D_\lambda(i)\widetilde D_\lambda(i+1)(T_{i+1}+T_{i+2})=\widetilde D_\lambda(i)\widetilde D_\lambda(i+1)(2i+3)^{-2}$,
so $\widetilde K(i)=0$ for the $2B+S+2$ integers $i$; the polynomial $G_0$ of degree $2B-S-2$ formed from the linear
factors common to $\widetilde D_\lambda(X)$ and $\widetilde D_\lambda(X+1)$ divides $\widetilde K$ and contributes that many
half-integer roots, none equal to $-3/2$; and $\widetilde D_\lambda(-3/2)=0$, because the $j=1$ factor $2X+3$ of $L_S$
vanishes there, gives $\widetilde K(-3/2)=0$. The degrees are those of the paper, so $\deg\widetilde K\ge4B+1>4B$ unless
$\widetilde K\equiv0$, and the paper's argument excluding $K\equiv0$ applies to $\widetilde K$ verbatim. The gap in the printed text was noted independently in \cite{HN26}.

\subsection{Exact verification of the statement}
Write $\mathcal R_{a,j}=\alpha_{a,j}+\beta_{a,j}G$ with $\alpha_{a,j},\beta_{a,j}\in\Q$, computed exactly from
the definitions. For each $S$-subset $A$ of rows, $m_A(t):=\det(\alpha_{a,j}+\beta_{a,j}t)_{a\in A,j\in J}\in\Q[t]$
has degree at most $S$. Then $\rank\mathcal R=S$ at $G$ if and only if some $m_A(G)\ne0$; in particular, if the
polynomials $m_A$ have no common complex root, the rank is $S$ for every value of $G$.

\begin{proposition}\label{prop:rank}
For every pair $(B,S)$ with $S\in\{1,2,3,4\}$ and $B\in\{S{+}1,\dots,S{+}6,12,16,20,25,30\}$, all
$\binom{S+3}{S}$ polynomials $m_A$ are nonzero and $\gcd_A m_A=1$ in $\Q[t]$. Hence $\rank\mathcal R=S$ for
every $G\in\mathbb C$, and in particular Theorem~2.1 of \cite{Sun26} holds at these parameters. (These pairs lie
outside the paper's regime $S=\lfloor B/20\rfloor$; the point is the rank statement, which the paper asserts for
all $B>S>0$.)
\end{proposition}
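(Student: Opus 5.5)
The proof is a finite exact computation, and the plan is to organize it so that every step is rational arithmetic and the final implication is elementary algebra.

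First I compute the coefficients $\alpha_{a,j},\beta_{a,j}$ exactly. Write $T_m=(-1)^m(G-S_{m-1})$ with $S_{m-1}\in\Q$, so that
\[
u_m=\frac{(-1)^m}{2m+1}\,G-\frac{(-1)^mS_{m-1}}{2m+1}.
\]
Substituting this into the definition of $\mathcal R_{a,j}$ gives
\[
\beta_{a,j}=\sum_{i}(-1)^i\binom{a+2B}{i}\Pi_i\,\frac{(-1)^{i+j}}{2(i+j)+1},
\]
with $\alpha_{a,j}$ defined analogously using the rational partial sums $S_{i+j-1}$. All of these are finite sums of rationals, evaluated with Python \texttt{fractions}. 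The integers involved are large (for $B=30$, $\Pi_i$ has roughly $180$ digits) but still within exact reach.

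Next, for each of the $\binom{S+3}{S}=\binom{S+3}{3}\le35$ row subsets $A$, I compute $m_A(t)=\det(\alpha_{a,j}+\beta_{a,j}t)$ as an element of $\Q[t]$. The $S\le4$ case allows direct Laplace expansion, or fraction-free Bareiss elimination over $\Q[t]$. As a safeguard, I would cross-check by evaluating at $S+1$ rational points and interpolating. I then record that each $m_A\neq0$ and compute $g=\gcd_A m_A$ by the Euclidean algorithm in $\Q[t]$, normalizing to monic form, and confirm that $g=1$. Making the polynomials primitive over $\Z$ first keeps the coefficient growth manageable.

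The deduction is then formal. If $\gcd_A m_A=1$, Bézout's identity gives polynomials $s_A\in\Q[t]$ with $\sum_A s_Am_A=1$. So at any $G\in\mathbb C$ some $m_A(G)\neq0$, meaning some $S\times S$ minor of $\mathcal R$ is nonzero. Since $\mathcal R$ has only $S$ columns, its rank is exactly $S$. I repeat this for each listed pair $(B,S)$.

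The main obstacle is not conceptual but computational reliability: coefficient explosion in the determinants and gcds at $B=30$, $S=4$. I would handle it by clearing denominators row by row before taking determinants, which changes $m_A$ only by a nonzero rational scalar. As an independent check, I would compute gcds modulo several large primes. There the gcd of the reductions is $1$ for a prime not dividing the leading coefficients, which certifies $\gcd=1$ over $\Q$. This gives a second route that avoids the growth issue entirely.
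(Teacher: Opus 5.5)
Your plan is correct and is essentially the paper's proof. You compute the $\binom{S+3}{S}$ minors $m_A$ exactly over $\Q[t]$ (the paper uses cofactor expansion), check each is nonzero, find $\gcd_A m_A=1$ by the Euclidean algorithm over $\Q$, and conclude that the $m_A$ have no common complex root, so some $S\times S$ minor is nonzero at every $G$. Your modular cross-check is a valid extra safeguard, but only under the condition you state, namely that the prime does not divide a leading coefficient of the integer $m_A$. You know that condition from the exact computation, so the check does not fully avoid that computation.
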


\begin{proof}
Exact computation (\texttt{sun\_rank.py}): the minors are evaluated by cofactor expansion over $\Q[t]$ and the
gcd by the Euclidean algorithm over $\Q$. The output lists, for each $(B,S)$, the number of nonzero minors
(always all of them) and the degree of the gcd (always $0$).
\end{proof}

The statement of Theorem~2.1 is thus not in doubt at small parameters; the obstruction to \cite{Sun26} lies
elsewhere.

\section{The scalar and its denominator}

\subsection{The denominator}
Let $H_B:=\lcm_k\den(c_k)$ in \eqref{eq:poly} and $x_k:=c_kH_B\in\Z$. For every prime $p\mid H_B$ some $c_k$
has $p$-adic valuation exactly $-v_p(H_B)$, so $p\nmid x_k$ for that $k$; hence $\gcd(x_0,\dots,x_S,H_B)=1$.
Under $G=a/q$,
\[
q^S\qhat_B=\frac{\Phi(a,q)}{H_B},\qquad\Phi(a,q):=\sum_{k=0}^Sx_ka^kq^{S-k},
\]
so $H_B^{\min}=H_B/\gcd\bigl(H_B,\Phi(a,q)\bigr)$.

\begin{remark}
For a prime $p\ge S$ dividing $H_B$ there is always a coprime pair with $p\nmid\Phi(a,q)$, since a nonzero binary
form of degree $S$ over $\Fp$ has at most $S<p+1$ zeros in $\mathbb P^1(\Fp)$. For $p<S$ this argument gives nothing, and $H_B$ carries a large power of $3$ in every row of Table~\ref{tab:main}, which the argument does not cover once $S\ge4$. We therefore exhibit the
worst case directly rather than argue prime by prime.
\end{remark}

\begin{lemma}\label{lem:cert}
For every $(B,S)$ in Table~\ref{tab:main}, with the row set $A$ used there, the coprime pair $(a,q)$ listed in the table satisfies $\gcd\bigl(H_B,\Phi(a,q)\bigr)=1$; it is $(1,1)$ in every row except $B=20$, where it is $(1,2)$. Consequently $\den\bigl(q^S\qhat_B(a/q)\bigr)=H_B$ for that pair, and
\[
\max_{\gcd(a,q)=1}\den\bigl(q^S\qhat_B(a/q)\bigr)=H_B .
\]
\end{lemma}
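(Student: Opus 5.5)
The lemma has two parts: a finite certificate checked row by row, and an elementary consequence that follows from the setup of the preceding subsection.

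\emph{The upper bound.} For any coprime $(a,q)$, the identity $q^S\qhat_B=\Phi(a,q)/H_B$ with $\Phi(a,q)\in\Z$ gives
\[
\den\bigl(q^S\qhat_B(a/q)\bigr)=H_B/\gcd\bigl(H_B,\Phi(a,q)\bigr)\le H_B .
\]
The maximum over coprime pairs is therefore at most $H_B$. It equals $H_B$ as soon as one pair has $\gcd(H_B,\Phi(a,q))=1$. So the whole content is the certificate.

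\emph{The certificate.} For each row $(B,S)$ of Table~\ref{tab:main}, with its fixed row set $A$, I would proceed in four steps.
\begin{enumerate}
\item Compute the coefficients $c_k\in\Q$ of \eqref{eq:poly} exactly. Write each $u_m$ as $\pm(G-S_{m-1})/(2m+1)$, which is affine in $G$ with rational coefficients. Form $\mathcal R[A,J]$ with entries $\alpha_{a,j}+\beta_{a,j}G$, as in Proposition~\ref{prop:rank}. Expand the determinant as a polynomial in $G$ over $\Q$, and multiply by $\pm F_B/\prod_{i<N}\Pi_i$ according to \eqref{eq:qhat}.
\item Form $H_B=\lcm_k\den(c_k)$ and the integers $x_k=c_kH_B$.
\item Evaluate $\Phi(a,q)=\sum_k x_ka^kq^{S-k}$ at the listed pair. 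For $(1,1)$ this is simply $\sum_k x_k$; for $(1,2)$ it is $\sum_k x_k2^{S-k}$.
\item Compute $\gcd(H_B,\Phi(a,q))$ with the Euclidean algorithm on exact integers and check that it equals $1$.
\end{enumerate}
No factorisation of $H_B$ is needed, only a gcd, so the check is cheap even when $H_B$ is enormous.

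\emph{A useful consistency check.} Since $\gcd(x_0,\dots,x_S,H_B)=1$, for each prime $p\mid H_B$ the form $\Phi$ is nonzero modulo $p$. A pair $(1,1)$ that fails at $p$ means $\sum_k x_k\equiv0\pmod p$. This explains why a second pair is needed at $B=20$. The natural first substitute is $(1,2)$, which I would try there before anything else.

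\emph{Main obstacle.} The only difficulty is computational reliability in step~1. The determinant involves huge rationals, namely $F_B$ and the products $\Pi_i$ up to $B=119$. The choice of $A$ and the sign must also match \eqref{eq:qhat} exactly. However, neither a wrong sign nor a common scalar error in $\det\mathcal R[A,J]$ affects $H_B$'s relation to $\Phi$ modulo primes, except through the scalar itself. I would therefore recompute $\qhat_B$ by a second route as a cross-check: take the $N\times N$ determinant of $\mathcal A_B$ directly, and confirm that it agrees with \eqref{eq:qhat} coefficient by coefficient at the smaller rows.
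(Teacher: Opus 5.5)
Your proposal is correct and is the paper's proof: the paper forms $H_B$ and the $x_k$ from the exact coefficients, evaluates $\Phi(a,q)$ at the listed pair, and checks by exact computation that $\gcd(H_B,\Phi(a,q))=1$. It then gets the maximum from $\den\bigl(q^S\qhat_B(a/q)\bigr)=H_B/\gcd(H_B,\Phi(a,q))\le H_B$, exactly as you do. Your proposed cross-check through the $N\times N$ determinant $\det\mathcal A_B$ matches the paper's separate validation of \eqref{eq:qhat} at small parameters, which is not part of the lemma's proof.
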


\begin{proof}
Exact computation (\texttt{sun\_gcd.py}): the integers $x_k$ and $H_B$ are formed from the exact coefficients,
$\Phi(a,q)$ is evaluated for the listed pair, and the gcd is $1$ in every case. The maximum is attained at that pair since $\den\le H_B$ always.
\end{proof}

\subsection{The computation}\label{sec:computation}
The indices $B$ were taken at the ends and in the interior of each range on which $S=\lfloor B/20\rfloor$ is constant; Table~\ref{tab:main} lists every index computed. For each such $B$ we computed, for \emph{every} admissible row set $A$
(all $\binom{S+3}{S}$ of them), the exact coefficients $c_k$ of
\eqref{eq:qhat}--\eqref{eq:poly}, the integer $H_B$, and the real number $\qhat_B(G)$ evaluated from the exact
$c_k$ at a working precision exceeding the total digit count of the largest coefficient by $80$ digits. The
rounding error is bounded by $(S{+}1)\max_k|c_k|\cdot10^{-(\mathrm{dps}-5)}$: $G$ is taken correctly rounded to
the working precision, each coefficient is converted with one rounding, each floating-point operation is
correctly rounded, $G^k\le1$, and the factor $10^5$ covers the handful of operations per term; in every row this bound is smaller than $|\qhat_B(G)|$ by a factor below $10^{-2700}$, re-evaluation at three and six times the working precision at $B=21$ reproduces $\log|\qhat_B(G)|$ to $20$ digits, and an independent interval-arithmetic evaluation at $B=21,40,100,119$
encloses $\log|\qhat_B(G)|$ in an interval of width below $10^{-30}$ whose rounded endpoints agree with the tabulated value to $18$ digits. Table~\ref{tab:main} reports the row set $A$ giving the
\emph{smallest} value of $T_B:=\log H_B+\log|\qhat_B(G)|$; the largest over $A$ exceeds it by at most $3.0\%$ (at $B=25$) and by less than $0.6\%$ for $B\ge59$, so the choice of $A$ is immaterial.
The last two columns record the $3$-adic valuation of $H_B$ ($H_B$ is odd in every case) and the pair $(a,q)$ of Lemma~\ref{lem:cert}.

\begin{theorem}\label{thm:main}
With the definitions of \cite{Sun26} and $S=\lfloor B/20\rfloor$, the values in Table~\ref{tab:main} hold
(exactly for $H_B$ and the row sets; to the displayed precision for the logarithms). In particular $T_B=\log H_B+\log|\qhat_B(G)|$ satisfies $1.489\,B^2\le T_B\le1.790\,B^2$ for every $B$
in the table, and $T_B>0$ throughout.
\end{theorem}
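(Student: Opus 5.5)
The theorem is a computational statement, so the proof is the certified computation described in Section~\ref{sec:computation}, organized so that every reported number is either exact or carries an explicit error bound.

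\emph{Exact coefficients.} For each $B$ in Table~\ref{tab:main} set $S=\lfloor B/20\rfloor$ and expand every $u_m=T_m/(2m+1)$ as an affine function of $G$ with exact rational coefficients. This uses $T_m=(-1)^m(G-S_{m-1})$, with $S_{m-1}$ a finite rational sum. From these form the entries $\mathcal R_{a,j}=\alpha_{a,j}+\beta_{a,j}G$ exactly. For each of the $\binom{S+3}{S}$ row sets $A$, compute $\det\mathcal R[A,J]$ as a polynomial in $G$ over $\Q$ by fraction-free elimination over $\Q[t]$; since $S\le5$, the degree is small. Multiply by the exact rational $\pm F_B/\prod_{i<N}\Pi_i$ from \eqref{eq:qhat} to obtain the $c_k$ of \eqref{eq:poly}. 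Then $H_B=\lcm_k\den(c_k)$, its $3$-adic valuation, and its parity are exact integer facts read off directly. The overall sign in \eqref{eq:qhat} is irrelevant, since only $|\qhat_B|$ enters.

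\emph{Certified evaluation.} Evaluate $\qhat_B(G)=\sum_k c_kG^k$ at a precision exceeding the digit length of $\max_k|c_k|$ by $80$ digits. This margin is needed because the value is a massive cancellation: $|\qhat_B|$ is tiny while $|c_k|$ is huge. Bound the rounding error by $(S+1)\max_k|c_k|\,10^{-(\mathrm{dps}-5)}$, using $0<G<1$ and correctly rounded operations, and check in each row that this bound is below $10^{-2700}|\qhat_B(G)|$. Then $\log|\qhat_B(G)|$ is determined to far more than the displayed digits, and its sign is certified: $\qhat_B(G)\ne0$. As cross-checks, repeat the evaluation at higher precision and in interval arithmetic at selected $B$. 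From these, form $T_B=\log H_B+\log|\qhat_B(G)|$ for every $A$, and tabulate the minimizing $A$.

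\emph{Deriving the inequalities.} The inequalities in the statement follow by inspecting the finitely many certified values of $T_B/B^2$. The minimum is $\ge1.489$ and the maximum is $\le1.790$, with the interval enclosures guaranteeing the rounding does not affect either comparison. Positivity of $T_B$ is immediate from the lower bound.

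\emph{Main obstacle.} The hard step is feasibility and reliability at large $B$ (up to $119$). The coefficients have thousands of digits and the cancellation is catastrophic, so the working precision must be chosen from the exact coefficient sizes rather than guessed. I would first try to control this with the explicit a priori error bound above. If that failed, I would evaluate exactly at a rational interval enclosure of $G$, with endpoints given by truncations of its alternating series, which yields an unconditional enclosure of $\qhat_B(G)$.
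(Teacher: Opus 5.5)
Your proposal is correct and is essentially the paper's own argument: it computes exact rational $c_k$ and $H_B$ from the minors for every row set $A$, evaluates $\qhat_B(G)$ at a working precision set by the total digit count of the largest coefficient plus $80$, checks the explicit bound $(S+1)\max_k|c_k|\,10^{-(\mathrm{dps}-5)}$ against $|\qhat_B(G)|$ in every row, cross-checks at higher precision and by interval arithmetic, and minimizes $T_B$ over $A$. Two small remarks: the $(a,q)$ column of the table, which you do not mention, is certified separately by the exact gcd computation of Lemma~\ref{lem:cert}; and your fallback of enclosing $G$ between partial sums of its alternating series is not usable, because those sums converge only like $m^{-2}$ while the enclosure must be narrower than roughly $|\qhat_B(G)|/\max_k|c_k|$.
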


\begin{table}[ht]\centering\footnotesize
\caption{Values from exact rational data with the definitions of \cite{Sun26}, $S=\lfloor B/20\rfloor$, and the row set $A$ giving the smallest $T_B=\log H_B+\log|\qhat_B(G)|$; logarithms are rounded to the displayed precision, with the certification of $\log|\qhat_B(G)|$ described in Section~\ref{sec:computation}. In every row $H_B$ is odd, and the listed coprime pair $(a,q)$ has $\gcd(H_B,\Phi(a,q))=1$ (Lemma~\ref{lem:cert}), so $\den(q^S\qhat_B(a/q))=H_B$ there. The rounding bound on $\log|\qhat_B(G)|$ is below $10^{-2700}$ relative in every row. The derivation in \cite{Sun26} would require $T_B\le-0.00966\,B^2+o(B^2)$.}\label{tab:main}
\begin{tabular}{@{}rrrrrrlrl@{}}\toprule
$B$ & $S$ & $\log H_B$ & $\log|\qhat_B(G)|$ & $T_B$ & $T_B/B^2$ & row set $A$ & $v_3(H_B)$ & $(a,q)$\\\midrule
20 & 1 & 5{,}888.3 & -5{,}243.3 & 645.0 & 1.612 & $\{0\}$ & 550 & $(1,2)$\\
21 & 1 & 6{,}498.8 & -5{,}795.3 & 703.4 & 1.595 & $\{1\}$ & 593 & $(1,1)$\\
25 & 1 & 9{,}263.4 & -8{,}299.6 & 963.8 & 1.542 & $\{0\}$ & 808 & $(1,1)$\\
30 & 1 & 13{,}483.7 & -12{,}108.4 & 1{,}375.3 & 1.528 & $\{1\}$ & 1106 & $(1,1)$\\
35 & 1 & 18{,}535.7 & -16{,}687.0 & 1{,}848.6 & 1.509 & $\{0\}$ & 1467 & $(1,1)$\\
39 & 1 & 23{,}179.5 & -20{,}914.3 & 2{,}265.2 & 1.489 & $\{1\}$ & 1779 & $(1,1)$\\
40 & 2 & 24{,}972.9 & -22{,}233.5 & 2{,}739.3 & 1.712 & $\{0,1\}$ & 1897 & $(1,1)$\\
50 & 2 & 39{,}544.8 & -35{,}404.1 & 4{,}140.8 & 1.656 & $\{3,4\}$ & 2917 & $(1,1)$\\
59 & 2 & 55{,}646.8 & -50{,}055.6 & 5{,}591.2 & 1.606 & $\{0,1\}$ & 3997 & $(1,1)$\\
60 & 3 & 58{,}435.3 & -52{,}127.4 & 6{,}307.9 & 1.752 & $\{0,1,2\}$ & 4135 & $(1,1)$\\
70 & 3 & 80{,}336.7 & -71{,}958.1 & 8{,}378.5 & 1.710 & $\{0,1,4\}$ & 5500 & $(1,1)$\\
79 & 3 & 103{,}153.7 & -92{,}720.8 & 10{,}432.9 & 1.672 & $\{3,4,5\}$ & 6910 & $(1,1)$\\
80 & 4 & 106{,}939.3 & -95{,}571.9 & 11{,}367.4 & 1.776 & $\{1,2,3,4\}$ & 7086 & $(1,1)$\\
90 & 4 & 136{,}379.0 & -122{,}311.0 & 14{,}068.0 & 1.737 & $\{0,1,2,3\}$ & 8840 & $(1,1)$\\
99 & 4 & 166{,}064.5 & -149{,}371.0 & 16{,}693.5 & 1.703 & $\{0,1,3,4\}$ & 10625 & $(1,1)$\\
100 & 5 & 170{,}910.9 & -153{,}020.7 & 17{,}890.2 & 1.789 & $\{1,2,3,4,7\}$ & 10840 & $(1,1)$\\
110 & 5 & 208{,}084.7 & -186{,}856.1 & 21{,}228.6 & 1.754 & $\{0,1,2,3,4\}$ & 13000 & $(1,1)$\\
119 & 5 & 244{,}874.7 & -220{,}369.9 & 24{,}504.8 & 1.730 & $\{0,1,2,3,4\}$ & 15114 & $(1,1)$\\
\bottomrule\end{tabular}\end{table}

\subsection{The paper's own denominator bound}
Sections~4--5 of \cite{Sun26} bound $H_B^{\min}$ as follows. With $D=2B$, $N=2B+S+3$ and, for an odd prime
power $Q$, the quantities $\Phi_Q(n)=\sum_{r<n}\lfloor r/Q\rfloor$, $N_{K,Q}(i)=\#\{1\le h\le K: Q\mid 2i+2h+1\}$,
$C^A_Q=\sum_{a\in A}\lfloor(a+2B)/Q\rfloor$, $F_{N,Q}(i)=\lfloor i/Q\rfloor+\lfloor(N-1-i)/Q\rfloor$ and
$n_{Q,r}(I)=\#\{i\in I:i\equiv r\ (Q)\}$, the paper defines (its (5.7), (5.8), (5.12))
\[
\lambda^A_Q(I)=C^A_Q+2\sum_{r\bmod Q}\binom{n_{Q,r}(I)}2+\sum_{i\in I}\Bigl(2N_{B,Q}(i)-N_{S,Q}(i)-2\cdot\mathbf 1_{Q\le2i+1}-F_{N,Q}(i)\Bigr),\quad
m^A_{Q,B}=\min_{|I|=S}\lambda^A_Q(I),
\]
$a_{Q,B}=2\sum_{i<N}N_{B,Q}(i)-\Phi_Q(D)$, and asserts, under $G=a/q$: identity (5.13),
$v_p(\prod_{i<N}\Pi_i)-v_p(F_D)=\sum_{\nu\ge1}a_{p^\nu,B}$; Lemma~5.3, $v_p\bigl(q^S\det\mathcal R[A,J]\bigr)\ge\sum_{\nu\ge1}m^A_{p^\nu,B}$;
and Corollary~5.2,
\begin{equation}\label{eq:524}
\log H_B^{\min}\le\sum_{p\ \mathrm{odd},\ \nu\ge1}\bigl(a_{p^\nu,B}-m^A_{p^\nu,B}\bigr)\log p .
\end{equation}
Sections~6--9 then evaluate the right-hand side of \eqref{eq:524} asymptotically and conclude
(Proposition~9.5) that it exceeds $-\log|\qhat_B|$ by at most $-\delta_0B^2+o(B^2)$.

\begin{proposition}\label{prop:sec5}
Implement the displayed definitions literally. For $(B,S,A)$ equal to $(20,1,\{0\})$, $(21,1,\{1\})$,
$(25,1,\{0\})$, $(30,1,\{1\})$ and $(40,2,\{0,1\})$, and for $(a,q)\in\{(1,1),(1,2)\}$: identity (5.13) holds
for every odd prime; Lemma~5.3 holds for every odd prime, with $q^S\det\mathcal R[A,J](a/q)$ computed exactly;
and the right-hand side of \eqref{eq:524} takes the values in Table~\ref{tab:524}, which exceed the exact
$\log\den\bigl(q^S\qhat_B(a/q)\bigr)$ in every case. Thus \eqref{eq:524} is valid and nearly sharp in these instances.
\end{proposition}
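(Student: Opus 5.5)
This is a finite exact computation, and I would organise it in four passes over the ten instances: five triples $(B,S,A)$, each with two pairs $(a,q)$. Throughout I would reuse the exact rational data already built for Table~\ref{tab:main}. These are the exact matrix $\mathcal R=\alpha+\beta G$ of Section~2, the integers $x_k$ and $H_B$, and $\Phi(a,q)$. The only new code is a literal transcription of the Section~5 quantities $\Phi_Q$, $N_{K,Q}$, $C^A_Q$, $F_{N,Q}$, $n_{Q,r}$, $\lambda^A_Q(I)$, $m^A_{Q,B}$ and $a_{Q,B}$. This code should be written with no simplification, so that what is tested is the paper's text and not a paraphrase of it. For each instance the only relevant odd primes are $p\le 2(B+N)+1$, since no larger prime divides any $\Pi_i$ or $F_D$. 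For each such prime the relevant prime powers are $p^\nu$ with $p^\nu\le 2(B+N)+1$; beyond that range $a_{p^\nu,B}=m^A_{p^\nu,B}=0$, so every sum over $\nu$ is finite. I would check that vanishing explicitly in the code rather than assume it.

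First, identity (5.13). For each odd $p$ I would compute $v_p\bigl(\prod_{i<N}\Pi_i\bigr)$ and $v_p(F_D)$ directly by factoring the explicit integers. Then I would compare their difference with $\sum_\nu a_{p^\nu,B}$. This is an integer identity, so equality is checked exactly.

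Second, Lemma~5.3. I would evaluate $\det\mathcal R[A,J]$ at $G=a/q$ as an exact rational, using cofactor expansion as in \texttt{sun\_rank.py}, and multiply by $q^S$. Its $p$-adic valuation is then read off directly. The lower bound $\sum_\nu m^A_{p^\nu,B}$ requires a minimum over all $S$-subsets $I\subset\{0,\dots,N-1\}$. For $S\le2$ and $N\le 2B+S+3\le 85$ this is at most $\binom{85}{2}$ subsets per prime power, so brute force is feasible. As a guard, I would also confirm that $\det\mathcal R[A,J](a/q)\ne0$, which is consistent with Proposition~\ref{prop:rank}.

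Third, the right-hand side of \eqref{eq:524}. I would sum $(a_{p^\nu,B}-m^A_{p^\nu,B})\log p$ over odd $p$ and $\nu$, keeping the integer exponents exact and rounding only the final logarithm. Fourth, I would compute $\den\bigl(q^S\qhat_B(a/q)\bigr)=H_B/\gcd(H_B,\Phi(a,q))$ from Section~3.1 and compare. For $(1,1)$ at $B\ne20$ and for $(1,2)$ at $B=20$ this equals $H_B$ by Lemma~\ref{lem:cert}; the other pairs are computed directly. Since $H_B$ is odd, no information is lost by restricting to odd primes. The stronger prime-by-prime inequality then follows from (5.13) and Lemma~5.3 together with \eqref{eq:qhat}. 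I would also record the per-prime slack as a consistency check, and use it to justify "nearly sharp" by reporting the relative gap in Table~\ref{tab:524}.

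I expect the main obstacle to be fidelity, not cost. The definitions have several conventions that could silently change the answer: the range of $h$ in $N_{K,Q}$, the indicator $\mathbf 1_{Q\le 2i+1}$, whether $I$ ranges over $\{0,\dots,N-1\}$, and the sign convention of $\det\mathcal R[A,J]$ against \eqref{eq:qhat}. So before trusting the comparison I would cross-validate on the smallest instance, $B=20$, $S=1$. There I would check the prime powers $p=3,5$ by hand, confirm that (5.13) holds for every $p$, and only then run the remaining cases.
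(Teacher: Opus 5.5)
Your plan is essentially the paper's proof. Like its \texttt{sun\_sec5.py}, it is an exact computation: the minimum in $m^A_{Q,B}$ is taken by brute force over all $\binom NS$ subsets, every prime power in the finite support is included, identity (5.13) and Lemma~5.3 are checked prime by prime against exactly computed valuations, and the right-hand side of \eqref{eq:524} is compared directly with the exact $\log\den$.

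One small correction concerns your claim that the ``stronger prime-by-prime inequality then follows''. From (5.13), Lemma~5.3 and \eqref{eq:qhat} you get only $v_p(\den)\le\bigl[\sum_\nu(a_{p^\nu,B}-m^A_{p^\nu,B})\bigr]_+$ for each prime. Summing these yields \eqref{eq:524} only if no prime has a negative total. So it is your direct comparison, or an explicit check that each per-prime total is nonnegative, that establishes the inequality, not that remark.
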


\begin{table}[ht]\centering\small
\caption{The paper's own bound \eqref{eq:524} against the exact denominator at the pair $(a,q)$ of Lemma~\ref{lem:cert} and the scalar $\qhat_B(G)$.
The derivation of Proposition~9.5 in \cite{Sun26} claims that the last column is $\le-0.00966\,B^2+o(B^2)$;
these are finite instances, see Corollary~\ref{cor:main} for what they establish.}\label{tab:524}
\begin{tabular}{@{}rrrrrrr@{}}\toprule
$B$ & $S$ & $A$ & exact $\log\den$ & RHS of \eqref{eq:524} & RHS $+\log|\qhat_B(G)|$ & $/B^2$\\\midrule
20 & 1 & $\{0\}$ & 5{,}888.3 & 5{,}912.1 & 668.8 & 1.672\\
21 & 1 & $\{1\}$ & 6{,}498.8 & 6{,}522.4 & 727.1 & 1.649\\
25 & 1 & $\{0\}$ & 9{,}263.4 & 9{,}304.5 & 1{,}004.9 & 1.608\\
30 & 1 & $\{1\}$ & 13{,}483.7 & 13{,}504.8 & 1{,}396.4 & 1.552\\
40 & 2 & $\{0,1\}$ & 24{,}972.9 & 25{,}030.7 & 2{,}797.2 & 1.748\\
\bottomrule\end{tabular}\end{table}

\begin{proof}
Exact computation (\texttt{sun\_sec5.py}); the minimum in $m^A_{Q,B}$ is taken over all $\binom NS$ subsets, and
all prime powers up to $2N+2B+5$ are included. The exact denominator agrees with the product of
$p^{[A_{p,B}-R_{p,B}]_+}$ over odd primes, as it must.
\end{proof}

So Lemma~5.3 and Corollary~5.2 of \cite{Sun26} hold in every tested instance, and the near-sharpness of
\eqref{eq:524} is also a check that the definitions were transcribed faithfully. The tested values raise a substantial discrepancy with the proposed asymptotic evaluation in Sections~6--9: the claimed constants ($4\rho-2\rho^2=39/200$, $c_{\rm odd}$,
$\Lambda_{\rm mid}$, $83/2400$) sum to $-0.00966$, whereas the value of the very quantity they estimate is between
$1.55\,B^2$ and $1.75\,B^2$ at the five indices above, and the corresponding quantity with $H_B$ in place of the
bound is between $1.49\,B^2$ and $1.79\,B^2$ at all eighteen indices of Table~\ref{tab:main}.

\subsection{Locating the discrepancy}\label{sec:locate}
The two sides of Proposition~9.5 can be separated exactly. Since $\sum_{p\ \mathrm{odd}}v_p(\prod_{i<N}\Pi_i)\log p=\log\prod_{i<N}\Pi_i$
($\Pi_i$ is odd), $\sum_{p\ \mathrm{odd}}v_p(F_D)\log p=\log F_D-v_2(F_D)\log2$, and
$\log|\qhat_B|=\log F_D+\log|\det\mathcal R[A,J](G)|-\log\prod_{i<N}\Pi_i$ by \eqref{eq:qhat}, identity (5.13) gives
\begin{equation}\label{eq:identity}
\Bigl[\text{RHS of }\eqref{eq:524}\Bigr]+\log|\qhat_B(G)|\;=\;\log\bigl|\det\mathcal R[A,J](G)\bigr|+v_2(F_D)\log2-\sum_{Q}m^A_{Q,B}\log p ,
\end{equation}
the last sum over odd prime powers $Q=p^\nu$. Here $v_2(F_D)=\sum_{r<2B}\bigl(r-s_2(r)\bigr)=2B^2+O(B\log B)$
exactly, and the $m$-sum is the object of the paper's Sections~6--8. To establish Proposition~9.5 by the route the paper takes, that is by showing that the right-hand side of \eqref{eq:524} plus $\log|\qhat_B|$ is at most $-\delta_0B^2+o(B^2)$, it is by \eqref{eq:identity} necessary and sufficient to prove
\begin{equation}\label{eq:implied}
\log\bigl|\det\mathcal R[A,J](G)\bigr|\;\le\;-v_2(F_D)\log2+\sum_Qm^A_{Q,B}\log p-\delta_0B^2+o(B^2).
\end{equation}
Table~\ref{tab:locate} gives the values of every term at nine indices (the first column certified to the displayed precision, the rest exact) ($m^A_{Q,B}$ from (7.6), the paper's
marginal-cost formula, which we checked against the brute-force minimum over all $S$-subsets on a sample of $50$ prime powers at the tabulated indices with $S\le3$ and on all $100$ prime powers up to $2N+2B+5$ for $(B,S)=(9,4)$ and $(10,5)$, with no discrepancy; the identity
\eqref{eq:identity} holds to all printed digits).

\emph{The prime $2$.} The term $v_2(F_D)\log2$ in \eqref{eq:identity} is the contribution of $p=2$ to the
paper's quantity: the positive part $[A_{2,B}-R_{2,B}]_+$ is zero by the paper's Lemma~5.4, so $2$ contributes
nothing to $H_B^{\min}$, while $|\qhat_B|$ contains the full factor $2^{v_2(F_D)}$ through $F_D$ in
\eqref{eq:qhat}. It equals $(2\log2)B^2+O(B\log B)$, that is $1.18$ to $1.34\,B^2$ in the table and $1.386\,B^2$
in the limit; and $v_2(H_B)=0$ at every tabulated $B$ (Table~\ref{tab:main}), consistent with Lemma~5.4. The
mechanism by which the ledger loses it is visible in the proof of Proposition~9.5: the $B^2\log B$ terms are said
to ``cancel because the denominator baseline $a_{Q,B}$ and the real normalization come from the same fixed
scalar''. By (5.13) the odd-prime baseline is $\sum_{p\ \mathrm{odd}}\sum_\nu a_{p^\nu,B}\log p=\log\prod_{i<N}\Pi_i-\log F_D+v_2(F_D)\log2$,
while the real normalization in $\log|\qhat_B|$ is $\log F_D-\log\prod_{i<N}\Pi_i$; their sum is not zero but
$v_2(F_D)\log2$, because $F_D$ is the only factor in \eqref{eq:qhat} with a $2$-part and the baseline runs over odd
primes only. The ledger, equation (9.3) of \cite{Sun26}, displays no term of this order. Remark~9.3 of \cite{Sun26} states
that the residual real power of $2$ from $F_D$ is ``included in the derivation of the odd-prime small-scale
expression (6.21)'', that is, in $c_{\rm odd}$. But $c_{\rm odd}=-I_{\rm odd}$ is defined in (6.20)--(6.23) as an explicit integral over odd prime powers $Q\le S$ with the singular part removed, and evaluates to $0.0063$, against $2\log2=1.386$. The displayed derivation does not exhibit how this contribution is combined with the other terms; no display in Sections~6--9 shows a term of that order or a compensating one. The displayed derivation therefore does not show how this contribution is accounted for, and it corresponds to most of the discrepancy in Table~\ref{tab:locate}.

\emph{The odd primes.} At the tested parameters the $m$-sum is small and its pieces agree with the paper's
constants to within about $0.1\,B^2$; this is agreement of instances and does not verify the asymptotics. Over $B<p\le(2+\rho)B$ it is $-0.046$ to $-0.070\,B^2$ over all tabulated indices, and $-0.065$ to $-0.070\,B^2$ at $B\in\{20,40,60,80\}$, where $\rho=1/20$ exactly, against the paper's (8.2) value $-\tfrac43\rho-\tfrac54\rho^2=-0.0698$. Over $S<p\le B$ it is $0.11$ to $0.14\,B^2$ for $B\ge40$, against
$\Lambda_{\rm mid}=0.176$; the shortfall of $0.04$ to $0.06\,B^2$ may be the slow convergence in (9.2), which we have not verified. Over $Q\le S$ it is at most $0.07\,B^2$. The primes $N<p\le2N+1-2S$ lie outside the ranges of
Sections~6--8; there every layer is exactly $m_p=-2S$: for $p>N$ no two indices below $N$ share a residue and $C^A_Q=F_{N,Q}=0$, the cost $2N_{B,p}(i)-N_{S,p}(i)-2\cdot\mathbf 1_{p\le2i+1}$ equals $-2$ for every $i\ge(p-1)/2$ (then $p<2i+2h+1\le2N+2B-1<3p$ for $1\le h\le B$, and the odd number $2i+2h+1$ cannot equal $2p$, so $N_{B,p}(i)=N_{S,p}(i)=0$), it is $\ge-1$ otherwise, and there are $N-(p-1)/2\ge S$ such $i$ exactly when $p\le2N+1-2S$; so they contribute
$-2S\sum\log p=-(4\rho-2\rho^2)B^2+o(B^2)$ to the $m$-sum, hence $+(4\rho-2\rho^2)B^2=+0.195\,B^2$ to the total
(exact values $0.16$ to $0.20\,B^2$). This is the paper's ``raw quadratic coefficient'' $39/200$, which the paper
attributes instead to Stirling's formula and the Cauchy--tail factor; the sign and size agree, and we cannot tell
from the text whether it is the same term under a different derivation.

\emph{The minor.} With $v_2(F_D)\log2$ restored and the odd layers as above, the right-hand side of
\eqref{eq:implied} is about $-1.4\,B^2$ and the residual discrepancy is $0.33$ to $0.50\,B^2$. That is the actual
size of the residual minor, $\log|\det\mathcal R[A,J](G)|=+0.20$ to $+0.41\,B^2$, together with the
$\Lambda_{\rm mid}$ shortfall: the archimedean estimate of Section~9, which bounds the Cauchy--Binet sum by its
largest summand and is not displayed explicitly enough to quote, appears to allow the minor no positive contribution of order $B^2$, and at every tested index the minor is positive, $+0.20$ to $+0.41\,B^2$.

\begin{table}[ht]\centering\small
\caption{The terms of \eqref{eq:identity}, divided by $B^2$, at the row set $A$ of Table~\ref{tab:main}.
``Required bound'' is the right-hand side of \eqref{eq:implied} without the $o(B^2)$ term. The split among the
three terms depends on the row set $A$ more than their sum does, which is why the first column is not monotone
in $B$ (the row sets at $B=100$ and $119$ differ).}\label{tab:locate}
\begin{tabular}{@{}rrrrrrr@{}}\toprule
$B$ & $S$ & $\log|\det\mathcal R[A,J](G)|$ & $v_2(F_D)\log2$ & $\sum_Qm^A_{Q,B}\log p$ & required bound & gap\\\midrule
20 & 1 & 0.247 & 1.178 & $-0.247$ & $-1.435$ & 1.68\\
21 & 1 & 0.240 & 1.188 & $-0.220$ & $-1.418$ & 1.66\\
25 & 1 & 0.218 & 1.211 & $-0.179$ & $-1.400$ & 1.62\\
30 & 1 & 0.195 & 1.231 & $-0.126$ & $-1.366$ & 1.56\\
40 & 2 & 0.315 & 1.265 & $-0.168$ & $-1.443$ & 1.76\\
60 & 3 & 0.355 & 1.297 & $-0.116$ & $-1.423$ & 1.78\\
80 & 4 & 0.384 & 1.317 & $-0.088$ & $-1.414$ & 1.80\\
100 & 5 & 0.406 & 1.329 & $-0.063$ & $-1.401$ & 1.81\\
119 & 5 & 0.358 & 1.336 & $-0.045$ & $-1.390$ & 1.75\\
\bottomrule\end{tabular}\end{table}

\begin{corollary}\label{cor:main}
The inequality \eqref{eq:claim} is not established by the argument of \cite{Sun26}, and the proof of its
Theorem~1.1 is incomplete.
\end{corollary}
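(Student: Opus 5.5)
The corollary says an argument fails to establish a claim. So the plan is to show that the derivation of \eqref{eq:claim} in \cite{Sun26} runs through a chain of inequalities whose combined conclusion is contradicted by exact data. The contradiction cannot come from the rank theorem, which is not in doubt at the tested parameters (Proposition~\ref{prop:rank}), and it cannot come from Corollary~5.2, which holds in every tested instance (Proposition~\ref{prop:sec5}). It must therefore come from the asymptotic evaluation in Sections~6--9.

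\textbf{Step 1: the bound must be uniform in the rational.} The derivation bounds $\log H_B^{\min}$ through \eqref{eq:524}, whose right-hand side does not depend on $(a,q)$. So whatever the argument proves for $\log H_B^{\min}+\log|\qhat_B|$, it proves for every coprime pair. In particular it applies to the pair of Lemma~\ref{lem:cert}, where $H_B^{\min}=H_B$. Hence, if the argument were valid, we would have $T_B\le-\delta_0B^2+o(B^2)$, and a fortiori the same bound for the right-hand side of \eqref{eq:524} plus $\log|\qhat_B|$.

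\textbf{Step 2: the error term cannot rescue the argument.} Theorem~\ref{thm:main} gives $T_B\ge1.489\,B^2$ at all eighteen indices. Finitely many values cannot by themselves contradict an $o(B^2)$ statement. However, the proof of Proposition~9.5 explicitly asserts that Sections~6--9 produce an exact ledger, namely \eqref{eq:identity} with the $m$-sum evaluated. In that ledger the positive term $v_2(F_D)\log 2=2\log2\,B^2+O(B\log B)$ occurs with no compensating display. Therefore the ``no error beyond $o(B^2)$'' claim would have to absorb a term of size $\ge1.3\,B^2$.

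\textbf{Step 3: locate the failure.} By \eqref{eq:identity}, establishing \eqref{eq:claim} by the paper's route is equivalent to \eqref{eq:implied}. Split the $m$-sum over the ranges $Q\le S$, $S<p\le B$, $B<p\le(2+\rho)B$, and $N<p\le 2N+1-2S$. The exact layer computations then show that each range has size at most $0.2\,B^2$ in absolute value. Combined with the minor being positive in Table~\ref{tab:locate}, this means the right-hand side of \eqref{eq:implied} is about $-1.4\,B^2$, which the displayed bounds cannot attain.

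\textbf{Main obstacle.} The hardest point is honesty about what finite data prove. I would state the conclusion as a gap in the derivation, not a disproof. The key inference is this: the ledger omits an explicit term of exact order $2\log2\,B^2$, which follows from the exact formula $v_2(F_D)=\sum_{r<2B}(r-s_2(r))$. Remark~9.3's attribution of this term to $c_{\rm odd}$ is incompatible with $c_{\rm odd}$ being an odd-prime integral of value $0.0063$. Hence Proposition~9.5 is unsupported, and so are Theorem~9.1 and Theorem~1.1.
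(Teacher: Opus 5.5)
Your proposal is correct and follows the paper's own proof. The uniformity of \eqref{eq:524} in $(a,q)$ transfers the claim to the worst-case pair of Lemma~\ref{lem:cert}, and the exact identity \eqref{eq:identity} exposes the unaccounted term $v_2(F_D)\log2=(2\log2)B^2+O(B\log B)$ that Remark~9.3 cannot assign to $c_{\rm odd}$, with the same caveat that finite data alone cannot refute an $o(B^2)$ assertion.

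Two wording fixes are needed. First, \eqref{eq:identity} is derived in this note from (5.13) and \eqref{eq:qhat}; it is not asserted in Sun's proof of Proposition~9.5, whose ledger (9.3) has no term of that order, and that absence is the point. Second, in Step~1 the implication runs from a bound on the right-hand side of \eqref{eq:524} plus $\log|\qhat_B|$ to the bound on $T_B$, not the other way round.
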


\begin{proof}
Directly: the paper's argument for \eqref{eq:claim} is \eqref{eq:524} followed by the estimate of
Proposition~9.5, and Proposition~\ref{prop:sec5} shows that the quantity the derivation bounds exceeds the displayed leading term by $1.55$ to $1.75\,B^2$ at the five indices of Table~\ref{tab:524}, with Section~\ref{sec:locate} identifying the
unaccounted $2$-adic term and the size of the minor as the terms responsible. Two limits of this evidence should
be stated. No finite computation refutes an $o(B^2)$ assertion by itself; what the computation shows is that the
derivation, which fixes the $B^2$ coefficient through explicit constants, leaves out a term that is exactly
computable and equals $(2\log2)B^2+O(B\log B)$, and that its coefficient disagrees with the values across a
factor of six in $B$. And the computation evaluates the paper's bounding method, which is uniform in $(a,q)$,
at the real value $G$; it is not a direct evaluation of $N_B$ under the hypothesis $G=a/q$, which no finite
computation can perform. What it establishes is that the published derivation does not justify the estimate.

Structurally: by Lemma~\ref{lem:cert}, for each tabulated $(B,S)$, $H_B$ is the largest possible value of $H_B^{\min}$ over all coprime $(a,q)$, and it is attained at the listed pair. The
argument of \cite[\S\S4--9]{Sun26} bounds $H_B^{\min}$ through $p$-adic valuations of the Cauchy--Binet
expansion of $\det\mathcal R[A,J]$, quantities that do not depend on $(a,q)$; any such bound is therefore a
bound valid for the worst pair, that is, a bound on $H_B$. Read this way, \eqref{eq:claim} asserts
$\log H_B+\log|\qhat_B|\le-\delta_0B^2+o(B^2)$, which differs from the values of Theorem~\ref{thm:main} by $1.5$ to $1.8\,B^2$
at every tested $B$. Read instead as a statement about the specific unknown pair $(a,q)$, \eqref{eq:claim} says $T_B-\log\gcd\bigl(H_B,\Phi(a,q)\bigr)\le-\delta_0B^2+r_B$ with $r_B=o(B^2)$, that is, $\log\gcd\bigl(H_B,\Phi(a,q)\bigr)\ge T_B+\delta_0B^2-r_B$: a cancellation in the gcd of order $(1.6+o(1))B^2$ for large $B$, if the tabulated behaviour of $T_B$ persists. Any
argument valid for all coprime pairs would have to hold at the pair of Lemma~\ref{lem:cert}, where the gcd is
$1$; an argument specific to the actual pair would need information about $a$ and $q$ beyond the fact that
$q^S$ clears the denominators of the coefficients, and \cite{Sun26} uses none.
\end{proof}

\begin{remark}
The paper's ledger (its Proposition~9.5) states that the $B^2\log B$ terms of $\log H_B^{\min}$ and
$\log|\qhat_B|$ cancel and that the remaining $B^2$ coefficient is $4\rho-2\rho^2=0.195$ before $p$-adic
corrections and $-0.0097$ after them ($\rho=S/B=1/20$). In the tabulated range $\log H_B/B^2$ grows from $14.7$ at $B=20$ to $17.3$ at $B=119$, consistent with a $B^2\log B$ contribution, while $T_B/B^2$ stays between $1.49$ and $1.79$, which is what the stated cancellation of $B^2\log B$ terms between the two sides predicts; the claimed asymptotic upper bound for the sum, however, has leading coefficient $-0.00966$. By Proposition~\ref{prop:sec5} and Section~\ref{sec:locate} the discrepancy is not, at the tested indices, in the denominator bound \eqref{eq:524} nor in the odd-prime layers of \S\S6--8; it is the unaccounted contribution of the prime $2$ (Remark~9.3) and, at those indices, the size of the residual minor.
\end{remark}

\begin{remark}
Nothing here bears on whether $G$ is irrational, which remains open; see \cite{RZ03} for the strongest
unconditional result in this direction and \cite{CDT24} for the arithmetic holonomy bounds that
\cite{Sun26} cites as motivation. For a comparable analysis of a recent claimed proof of the irrationality of
$\zeta(5)$ see \cite{Chen24}.
\end{remark}

\section{Relation to the public discussion}\label{sec:public}

At the time of writing (15 September 2026) \cite{Sun26} has a single version and no erratum; the author informed
us on 14 September (private communication) that errors had been reported to him and that a corrected version
was to be uploaded, and none has appeared. The recurrence
inconsistency of Section~2.1 was pointed out in a Hacker News comment \cite{HN26}. A MathOverflow question
\cite{MO26}, posted 12 September and closed as off-topic, reports an AI-assisted verification that repairs
Theorem~2.1 as above, confirms the cancellation of the $B^2\log B$ terms, and computes the quantity of
Proposition~9.5 using the paper's own $p$-adic test layers $\lambda^A_Q(J)$ and a $100$-digit evaluation of
$\det\mathcal R[A,J]$, finding totals of $-0.02\,B^2$ at $B=20$ and $+0.45$ to $+0.94\,B^2$ at $B=30$ to $80$, which they summarize as a coefficient near $+0.8$ rather than $-0.00966$. That computation and ours agree in sign for their reported cases with $B\ge30$, and a direct comparison at $B=20$ with the same row set $A=\{0\}$ locates the difference. Their archimedean value $-5158.5$, against our certified $-5243.3$, reflects a truncation of the tail series for $T_m$ after $21$ terms in their code, an error of order $10^{-5}$ in $G$ that the alternating sums amplify. Their $p$-adic sum gives $5151.9$; it uses the test layer $\lambda^A_Q(J)$ in place of the minimum $m^A_{Q,B}$, so it falls below both the exact denominator, $5888.3$, and the paper's bound \eqref{eq:524}, $5912.1$ (Table~\ref{tab:524}); the sources of the discrepancy are those of Section~\ref{sec:locate}. We are not aware of a public, reproducible account of a specific failure; Theorem~\ref{thm:main} and Corollary~\ref{cor:main} are offered as one.

\section{Reproducibility}\label{sec:repro}

The ancillary directory contains: \texttt{sun\_lib.py} (exact construction of $\mathcal R$, exact minors by
fraction-free elimination and Lagrange interpolation over $\Q$); \texttt{sun\_rank.py}
(Proposition~\ref{prop:rank}); \texttt{sun\_final.py} and its output \texttt{sun\_final.out}, \texttt{sun\_final.json} (Table~\ref{tab:main}, including the rounding bounds and the $S$-smooth part of $H_B$); \texttt{sun\_gcd.py} and \texttt{sun\_gcd.out} (Lemma~\ref{lem:cert}: $v_2(H_B)=0$, $v_3(H_B)$, and a coprime pair $(a,q)$ with $\gcd(H_B,\Phi(a,q))=1$ for every row); \texttt{sun\_sec5.py} and \texttt{sun\_sec5.json} (Proposition~\ref{prop:sec5}: identity (5.13), Lemma~5.3 prime by prime, and the exact value of \eqref{eq:524}); \texttt{sun\_ranges.py}, \texttt{sun\_ranges.json}, \texttt{sun\_detsize.py}, \texttt{sun\_detsize.json} and \texttt{sun\_checks\_extra.py} (Section~\ref{sec:locate}: the layers $m^A_{Q,B}$ and $a_{Q,B}$ by prime range, the brute-force check of (7.6), the identity \eqref{eq:identity} and Table~\ref{tab:locate}); \texttt{sun\_validate.py}, which evaluates $\det\mathcal A_B$ directly as an
$N\times N$ determinant at $400$ digits and confirms \eqref{eq:qhat} to $15$ digits for
$(B,S)=(4,1),(6,2),(7,3)$ and several row sets, which repeats the evaluation of $\log|\qhat_B(G)|$ at $B=21$ at three precisions, and which encloses $\log|\qhat_B(G)|$ by interval arithmetic at $B=21,40,100,119$. All scripts use only the Python standard library and \texttt{mpmath} (outputs shipped were produced with CPython~3.11--3.14 and mpmath~1.3.0; the README gives the execution order); \texttt{sun\_final.py} runs in about forty minutes on a laptop (twelve of them for $B=119$) and must be followed by \texttt{sun\_add20.py}, which appends the $B=20$ row; every other script runs in under three minutes. The text of \cite{Sun26} used for the definitions is its arXiv
HTML rendering of version~1.

\end{document}